\documentclass{article}

\usepackage[utf8]{inputenc}
\usepackage{amsmath,amsthm,amssymb}
\usepackage{graphicx} 
\usepackage{xcolor}
\usepackage{lineno}
\usepackage{enumitem}
\usepackage[margin=3.5cm]{geometry}
\usepackage[normalem]{ulem}
\usepackage{thmtools,thm-restate}
\usepackage{comment}

\usepackage{tikz,verbatim}
\usetikzlibrary{shapes.geometric}
\newtheorem{theorem}{Theorem}[section]

\newtheorem{lemma}[theorem]{Lemma}
\newtheorem{conjecture}[theorem]{Conjecture}
\newtheorem{definition}[theorem]{Definition}
\newtheorem{prop}[theorem]{Proposition}

\newtheorem*{claim*}{Claim}
\newtheorem*{subclaim*}{Subclaim}
\newtheorem*{ack}{Acknowledgments}
\usepackage{pgfplots}

\usetikzlibrary{decorations.markings}
\tikzset{myptr/.style={decoration={markings,mark=at position 1 with %
    {\arrow[scale=3,>=stealth]{>}}},postaction={decorate}}}
\newcommand{\equal}{=} 
\tikzset{
blackvertexv2/.style={circle, draw=black!100,fill=black!100,thick, inner sep=0pt, minimum size= 2.5mm},
bluevertexv2/.style={circle, draw=black!100,fill=blue!100,thick, inner sep=0pt, minimum size= 2.5mm},
redvertexv2/.style={circle, draw=black!100,fill=red!100,thick, inner sep=0pt, minimum size= 2.5mm},
dummywhite/.style={circle, draw=white!100,fill=white!100,thick, inner sep=0pt, minimum size= 0.5mm},
ellipsenodev1/.style={ellipse, draw=black!100,fill=none,thick, inner sep=0pt, minimum width= 1.5cm, minimum height = 2.75cm},
ellipsenodev2/.style={ellipse, draw=black!100,fill=none,dashed, inner sep=0pt, minimum width= .75cm, minimum height = 2.25cm},
ellipsenodev3/.style={ellipse, draw=black!100,fill=none,thick, inner sep=0pt, minimum width= 2cm, minimum height = 1cm},
}
\definecolor{hanpurple}{rgb}{0.32, 0.09, 0.98}
\definecolor{SoffiaRed}{RGB}{160,20,20}

\newcommand{\lp}{\! \left (}
\newcommand{\rp}{\right )}

\newcommand{\vtxa}{u}
\newcommand{\vtxb}{v}
\newcommand{\vtxc}{w}

\newcommand{\oddbound}{{t \choose 2}(7t+7)}
\newcommand{\odd}{\xrightarrow{\text{odd}}}

\newenvironment{claimproof}{ \trivlist
	\item[\hskip\labelsep
	\textit{Proof of the claim}.]\ignorespaces
}{\hfill$\vartriangleleft$\medskip
	
}

\usepackage[colorlinks=true,linkcolor=red!80!black,citecolor=blue]{hyperref}
\usepackage{authblk}

\usepackage{graphicx} 

\usepackage{todonotes}

\title{Homomorphism counting from a proper immersion-closed class is not isomorphism}
\author[1,2]{Andrea Jim\'enez\thanks{A. Jim\'enez is
supported by ANID/Fondecyt Regular 1220071 and ANID-MILENIO-NCN2024-103.}}
\author[3]{Benjamin Moore\thanks{Benjamin Moore acknowledges the support of the Natural Sciences and Engineering Research Council of Canada (NSERC)[RGPIN-2025-07125],  Cette recherche a été financé par le Conseil de recherches en sciences naturelles et en génie du Canada (CRSNG) [RGPIN-2025-07125].}}

\author[1]{Daniel A. Quiroz\thanks{D.A. Quiroz is
supported by ANID/Fondecyt Regular 1252197 and MATH-AMSUD MATH230035. \phantom{arroz}Emails: \{Ben.Moore\}@umanitoba.ca, \{andrea.jimenez,daniel.quiroz\}@uv.cl,  yyoo2@alaska.edu}}
\author[4]{Youngho Yoo}

\affil[1]{Universidad de Valparaíso, Valparaíso, Chile}
\affil[2]{Millennium Nucleus for Social Data Science (SODAS), Santiago, Chile}
\affil[3]{University of Manitoba, Winnipeg, Canada}
\affil[4]{University of Alaska Fairbanks, Fairbanks, United States of America}
\date{February 2026}

\begin{document}

\maketitle

\begin{abstract}
    Lov\'{a}sz proved that two graphs $G$ and $H$ are isomorphic if $\hom(K,G) = \hom(K,H)$ for all graphs $K$, where $\hom(G_1,G_2)$ denotes the number of homomorphisms from $G_1$ to $G_2$. Dvo\v{r}\'{a}k showed that it suffices to count homomorphisms from all $2$-degenerate graphs $K$. On the other hand, for several interesting graph classes $\mathcal{M}$, it has been shown that there exist non-isomorphic graphs $G$ and $H$ such that $\hom(K,G)=\hom(K,H)$ for all $K\in \mathcal{M}$. Most such classes are minor-closed and Roberson conjectured that every proper minor-closed graph class $\mathcal{M}$ has the property that there exist non-isomorphic graphs that are indistinguishable by homomorphism counts from $\mathcal{M}$.  There has been an effort to prove Roberson's conjecture as it is believed that minor-closed classes play a special role in demarcating the graph classes that satisfy this property.

    We show that this special role, if it exists, must be shared, by proving an analogue of Roberson's conjecture for a rich family of non-minor-closed classes. Namely, we prove that for any proper immersion-closed graph class $\mathcal{M}$, there exist non-isomorphic graphs $G$ and $H$ such that $\hom(K,G) = \hom(K,H)$ for all $K \in \mathcal{M}$.
    This extends a result of Roberson on homomorphism indistinguishability over bounded degree graphs, and cannot be extended in the natural way by replacing immersions with topological minors due to a result of Neuen and Seppelt. 
    
    Our main result is obtained as a consequence of a colouring result which may be of independent interest: Every graph with a $\oddbound$-oddomorphism admits a $K_{t}$-immersion. We also give a shorter proof of a result of Neuen that every graph with a $t$-oddomorphism has treewidth at least $t-1$, which implies that homomorphism counts from graphs of bounded treewidth does not distinguish non-isomorphic graphs.

\end{abstract}
\section{Introduction}
Determining the complexity of graph isomorphism is one of the most important open problems in theoretical computer science. Currently, the best known worst-case running time of a graph isomorphism algorithm is quasi-polynomial due to Babai \cite{Bab16}. Babai's algorithm uses a combination of sophisticated algebraic techniques as well as the $k$-dimensional Weisfeiler-Leman algorithm for small values of $k$. Practical algorithms, such as NAUTY, all use a framework called individualization-refinement \cite{MCKAY201494}.
The solvers all build on a framework called individualization-refinement. While we will not need the precise details, they all revolve around finding fast ``refinement" algorithms, and then getting them ``unstuck" if needed with individualization arguments. 

The most well known of these refinement algorithm is called \textit{colour refinement} or the \textit{one-dimensional  Weisfeiler-Leman algorithm}. This refinement algorithm is used in practical graph isomorphism solvers and it was shown in \cite{Smoothedanalysisstoc} that, in a smoothed analysis setting, colour refinement solves the majority of cases of graph isomorphism in near-linear time. While the algorithmic definitions of colour refinement are more useful for actual computation, in this note we will instead consider the homomorphism definition. 

Recall that for two graphs $G$ and $H$, a \underline{graph homomorphism} from $G$ to $H$ is a function $f:V(G) \to V(H)$ such that for all $e = xy \in E(G)$, we have $f(x)f(y) \in E(H)$. Let $\hom(G,H)$ denote the number of homomorphisms from $G$ to $H$. 

For two graphs $G$ and $H$, we say that the colour refinement algorithm \underline{fails to distinguish $G$} \underline{from $H$} if for all trees $T$, we have $\hom(T,G) = \hom(T,H)$. For those used to the algorithmic version of colour refinement, this is perhaps a surprising equivalence, which was shown by Dvo\v{r}\'{a}k in \cite{Dvorak}. Nevertheless, we  take this as a definition of the colour refinement.

While colour refinement is extremely effective in practice, it also fails on relatively simple classes of graphs. For example, it is easy to see it cannot distinguish two $d$-regular graphs on the same number of vertices. In particular, it fails to distinguish $2K_{3}$ from $C_{6}$, i.e., for every tree $T$, we have $\hom(T,2K_{3}) = \hom(T,C_{6})$. Therefore, a natural approach to extending the colour refinement algorithm is to count homomorphisms from wider classes of graphs beyond trees. This produces spectacular results and often maintains the efficiency of colour refinement.

Given a class $\mathcal{M}$ of graphs, we say that two graphs $G$ and $H$ are \underline{$\mathcal{M}$-isomorphic}, denoted $G \equal_{\mathcal{M}} H$, if $\hom(K,G) = \hom(K,H)$ for all graphs $K \in \mathcal{M}$. Lov\'{a}sz famously proved:

\begin{theorem}[\cite{Lovasz1967Operations}]
\label{thm:lovasz}
    If $\mathcal{M}$ is the class of all graphs, then $G \equal_{\mathcal{M}} H$  if and only if $G$ and $H$ are isomorphic. 
\end{theorem}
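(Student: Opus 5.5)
The easy direction is immediate: if $\phi\colon G\to H$ is an isomorphism, then composing with $\phi$ gives a bijection between homomorphisms $K\to G$ and homomorphisms $K\to H$, so $\hom(K,G)=\hom(K,H)$ for every $K$.

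For the converse I would follow Lov\'asz's inclusion--exclusion argument through injective homomorphisms. Let $\mathrm{inj}(K,G)$ denote the number of injective homomorphisms from $K$ to $G$. Every homomorphism $f\colon K\to G$ factors uniquely through the partition $P$ of $V(K)$ into fibres of $f$. It induces an injective homomorphism from the quotient graph $K/P$, obtained by identifying each class of $P$ to a single vertex. If some class of $P$ contains an edge of $K$, then $K/P$ has a loop and admits no homomorphism into the loopless graph $G$, so these $P$ contribute zero. This gives
\[
\hom(K,G)=\sum_{P} \mathrm{inj}(K/P,G),
\]
where the sum ranges over all partitions $P$ of $V(K)$. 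The partition into singletons gives the term $\mathrm{inj}(K,G)$; every other $K/P$ has strictly fewer vertices than $K$.

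Next I would show, by induction on $|V(K)|$, that $\hom(K',G)=\hom(K',H)$ for all $K'$ forces $\mathrm{inj}(K,G)=\mathrm{inj}(K,H)$ for all $K$. The identity expresses $\mathrm{inj}(K,G)$ as $\hom(K,G)$ minus a sum of $\mathrm{inj}$ values of graphs on fewer vertices. Those graphs may carry multiple edges after identification, but parallel edges can be collapsed without changing injective homomorphism counts into simple graphs. So the inductive hypothesis applies, and the same holds for $H$.

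To finish, I take $K=G$. Then $\mathrm{inj}(G,H)=\mathrm{inj}(G,G)\ge 1$, since the identity map counts, so there is an injective homomorphism $G\to H$. Symmetrically there is an injective homomorphism $H\to G$. Counting with $K=K_1$ gives $|V(G)|=|V(H)|$, so both maps are bijections on vertices. An injective homomorphism maps edges injectively to edges, so $|E(G)|\le|E(H)|$ and $|E(H)|\le|E(G)|$, hence equality. A vertex bijection that maps edges to edges with equal edge counts is an isomorphism. The main point requiring care is the bookkeeping in the Möbius-type inversion: quotients that create loops must vanish, and the induction must be well-founded. The latter holds because every nontrivial quotient strictly reduces the vertex count.
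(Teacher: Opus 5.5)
Your proof is correct and complete. It is the standard Lov\'asz argument: expand $\hom(K,G)=\sum_P \mathrm{inj}(K/P,G)$ over partitions of $V(K)$, invert by induction on $|V(K)|$ to get $\mathrm{inj}(K,G)=\mathrm{inj}(K,H)$ for all simple $K$, then take $K=G$ and $K=H$. You correctly use that $G$ and $H$ are simple to discard looped quotients and collapse parallel edges. The paper gives no proof of this statement and only cites Lov\'asz's 1967 paper, so your argument is essentially the one behind that citation.
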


Thus, homomorphism counts from trees produce the colour refinement algorithm and homomorphism counts from all graphs produce graph isomorphism. Homomorphism counts from other classes in between produce interesting results as well. Table \ref{tab:homomorphism_equivalence} shows some of the various relationships, but is by no means exhaustive. 

\begin{table}[h]
\centering
\begin{tabular}{|l|p{5cm}|p{4cm}|}
\hline
\textbf{Graph class $\mathcal{M}$} & \textbf{$G \equal_{\mathcal{M}} H$} & \textbf{Complexity}  \\
\hline
Cycles & $G$ and $H$ are cospectral (see \cite{GroheRattanSeppelt2025}) & $O(n^{3})$ (see \cite{GroheRattanSeppelt2025})  \\
\hline
Graphs with treewidth $\leq k$ & $G$ and $H$ are indistinguishable by the $k$-dimensional Weisfeiler-Leman Algorithm (see \cite{GroheRattanSeppelt2025}) & $O(n^{k+1}\log(n))$  \cite{immerman2019kdimensionalweisfeilerlemanalgorithm}) \\
\hline
Planar graphs & $G$ and $H$ are quantum isomorphic \cite{quantumisomorphism} & Undecidable \cite{ATSERIAS2019289} \\
\hline
2-degenerate graphs & $G$ and $H$ are isomorphic \cite{Dvorak} & At worst quasi-polynomial time \cite{ Bab16} \\
\hline
All graphs & $G$ and $H$ are isomorphic \cite{Lovasz1967Operations} & At worst quasi-polynomial time \cite{Bab16} \\
\hline
Treedepth $\leq k$ & $G$ and $H$ admit a coKleisli isomorphism with respect to Ehrenfeucht-Fraïss\'{e} comonad \cite{DawarJaklReggio2021} & LOGSPACE \cite{treedepth} \\
\hline
Disjoint union of paths &  $G$ and $H$'s adjacency matrices have the same multisets of main eigenvalues \cite{cerny2025homomorphismindistinguishabilitymultiplicityautomata} & Exact Logspace \cite{cerny2025homomorphismindistinguishabilitymultiplicityautomata} \\ 
\hline
$K_{5}$-topological-minor-free & $G$ and $H$ are isomorphic \cite{neuen2026distinguishinggraphscountinghomomorphisms}& At worst quasi-polynomial time \cite{Bab16} \\
\hline
\end{tabular}
\caption{Homomorphism counting equivalence for different graph classes. For each class $\mathcal{M}$, the table shows the characterization of $G \equal_{\mathcal{M}} H$ and computational complexity of the equivalence test.}
\label{tab:homomorphism_equivalence}
\end{table}

We see from Table \ref{tab:homomorphism_equivalence} that for a given graph class $\mathcal{M}$, it is not obvious what the relation $\equal_{\mathcal{M}}$ is, nor what the complexity of determining if $G \equal_{\mathcal{M}} H$ is. However, important graph isomorphism tests appear by considering various natural classes $\mathcal{M}$.  In Table~\ref{tab:homomorphism_equivalence}, every class~$\mathcal{M}$ for which $=_\mathcal{M}$ differs from isomorphism is contained in a minor-closed\footnote{Given a graph $G$, a graph $H$ is a \underline{minor} of $G$ if it can be obtained by deleting edges, contracting edges, and deleting isolated vertices. A graph class $\mathcal{M}$ is \underline{minor-closed} if every minor of every graph in $\mathcal{M}$ is also in $\mathcal{M}$.} class. This motivates the following conjecture of Roberson.
A graph class $\mathcal{M}$ is \underline{proper} if it does not contain all graphs and it is \underline{union-closed} if the disjoint union of any two graphs in $\mathcal{M}$ is also in $\mathcal{M}$.

\begin{conjecture}[\cite{oddomorphismpaper}]\label{conj:roberson}
    If $\mathcal{M}$ is a proper graph class that is minor-closed and union-closed, then there are non-isomorphic graphs $G$ and $H$ such that $G\equal_{\mathcal{M}} H$.
\end{conjecture}

However, even if this conjecture were true, it would not give the full picture. Indeed if $\mathcal{M}$ is the class of graphs with maximum degree at most $d$, then $\equal_{\mathcal{M}}$ is not the isomorphism relation as shown in \cite{oddomorphismpaper}, and the class of graphs with maximum degree at most $d$ is not minor-closed for any fixed $d \geq 3$. So what is the correct level of generality? A natural starting point to unify bounded degree graphs and minor-closed classes are classes that are closed under topological minors\footnote{$G$ contains $H$ as a \underline{topological minor} if a subgraph of $G$ contains a subdivision of $H$}. Yet recently Neuen and Seppelt~\cite{neuen2026distinguishinggraphscountinghomomorphisms} showed that homomorphism counting is isomorphism even for the class of graphs with no $K_{5}$ topological minor. The authors of \cite{neuen2026distinguishinggraphscountinghomomorphisms} interpret this last result as evidence of the ``special role'' that  minor-closed classes play in demarcating the graph classes $\mathcal{M}$ for which $=_\mathcal{M}$ differs from isomorphism. However, we show that the role of \textit{immersions} is at least as important as minors by proving the following immersion-analogue of Roberson's conjecture. 

\begin{theorem}
\label{thm:maintheorem}
    If $\mathcal{M}$ is a proper graph class that is immersion-closed and union-closed, then there are non-isomorphic graphs $G$ and $H$ such that for all $K \in \mathcal{M}$, we have $\hom(K,G) = \hom(K,H)$. 
\end{theorem}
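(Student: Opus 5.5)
The plan is to use Roberson's oddomorphism framework from \cite{oddomorphismpaper}. Roberson showed that if $\mathcal{M}$ is union-closed and closed under taking (homomorphic images / subgraphs in the appropriate sense), and some graph $F$ has no oddomorphism from any graph in $\mathcal{M}$ (more precisely, no $K\in\mathcal{M}$ admits an oddomorphism to $F$), then the CFI-type pair $G_F$ and $\widetilde{G}_F$ (the twisted and untwisted Cai–Fürer–Immerman graphs over $F$) are non-isomorphic, yet $\hom(K,G_F)=\hom(K,\widetilde{G}_F)$ for all $K\in\mathcal{M}$. The key fact is that a homomorphism count from $K$ to the two CFI graphs can differ only when $K$ has a weak oddomorphism to $F$, and such a $K$ contains a subgraph with an oddomorphism to $F$. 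So it suffices to find a graph $F$ such that every graph with an oddomorphism to $F$ lies outside $\mathcal{M}$.

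Since $\mathcal{M}$ is proper and immersion-closed, some graph is missing from it, and every graph is an immersion of a large enough clique. Hence there is a $t$ with $K_t\notin\mathcal{M}$, and then no graph in $\mathcal{M}$ contains a $K_t$-immersion. I would take $F=K_{s}$ with $s=\oddbound$ (or any graph whose oddomorphism preimages are forced to be rich). The central step is the colouring result announced in the abstract: every graph $K$ with an oddomorphism to $K_s$, where $s = \oddbound$, admits a $K_t$-immersion. Given this, any $K\in\mathcal{M}$ with an oddomorphism to $F$ would contain a $K_t$-immersion and so $K_t\in\mathcal{M}$, a contradiction. Closure of $\mathcal{M}$ under subgraphs (immersion-closure implies subgraph-closure) handles the passage from weak oddomorphisms to oddomorphisms, and union-closure is what Roberson's theorem needs.

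For the colouring result, the oddomorphism $\varphi:K\to K_s$ partitions $V(K)$ into $s$ colour classes. Every vertex has odd-vertices, which have an odd number of neighbours in every other class, and each class has an odd number of odd vertices. My approach would be to pick $t$ classes and one odd vertex $v_i$ in each as branch vertices. Then for each of the $\binom{t}{2}$ pairs I would route edge-disjoint paths between $v_i$ and $v_j$, using the remaining $s-t$ classes as reservoirs. The parity conditions guarantee that each odd vertex has a neighbour in every class, and within the preimage of an edge $ab$ of $K_s$ the bipartite graph between the two classes has every odd vertex of positive degree. This gives enough connectivity to build paths greedily. I would allot roughly $7t+7$ fresh classes to each pair, matching the $\binom{t}{2}(7t+7)$ bound, so that the paths for different pairs use disjoint edge sets.

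The main obstacle is this routing step. Each branch vertex $v_i$ needs $t-1$ edge-disjoint paths leaving it, yet it is guaranteed only about one neighbour per class, which is fine since the classes are many. The harder part is continuing a path from a non-odd vertex, which may have no neighbours in the next class. I would first try a parity/handshake argument: within the subgraph induced by the preimage of two classes, the odd vertices are forced to have odd degree in a suitable auxiliary graph. A component containing an odd-degree vertex must contain another odd-degree vertex, so from any odd vertex one can walk to another odd vertex. Chaining such walks through the dedicated classes should yield the required paths, with edge-disjointness coming from the disjoint class allocation.
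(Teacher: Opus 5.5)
Your top-level reduction matches the paper's. You pick $t$ with $K_t\notin\mathcal{M}$, apply Roberson's result (Theorem~\ref{thm:robinsontheorem}) to $K_s$ with $s=\binom{t}{2}(7t+7)$, and use subgraph-closure to pass from a weak oddomorphism to a subgraph with an oddomorphism to $K_s$. Everything then rests on the colouring statement (Theorem~\ref{thm:largeoddimplieslargeimmersion}). (Union-closure plays no role in Roberson's theorem; properness and immersion-closure already give $K_t\notin\mathcal{M}$.) The gap is in your proof of that colouring statement, which is where all the work lies: the routing argument you sketch does not establish it.

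\textbf{Why the routing fails.} The handshake fact you invoke only says that the component of an odd vertex $v_i$ in $K[C_i,C_k]$ contains \emph{some} other $f$-odd vertex. That vertex may again lie in $C_i$, and nothing steers a chain of such walks towards the prescribed branch vertex $v_j$. An arbitrary choice of the $v_i$ can even put them in different components, e.g.\ in $3K_s$, which admits an oddomorphism to $K_s$.

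This is not a technicality. The paper shows (Lemma~\ref{lem:acycliccolouring} and the Claim in Section~\ref{sec:mainresult}) that one may assume every $K[C_i,C_j]$ is a forest in which no path of length at least $2$ joins odd vertices of different colours. So from $v_i$, the only two-coloured route to an odd vertex of another colour is a single edge to an odd neighbour, which $v_i$ need not have. From even vertices there may be no edge to a new class at all.

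\textbf{The missing idea.} Use these same-colour pairings rather than fight them. Split every two-coloured path between odd vertices to obtain a multigraph on the odd vertices in which every vertex has degree at least $s-1$. Then one of two things happens:
\begin{itemize}
\item Its simplification has minimum degree at least $7t+7$. Then the Gauthier--Le--Wollan theorem (Theorem~\ref{thm:mindegreethm}) gives a $K_t$-immersion, whose branch vertices bear no relation to colour classes.
\item Two vertices, necessarily of the same colour, are joined by $\binom{t}{2}$ edge-disjoint two-coloured paths. Then the $\mathcal{P}$-merger (Lemma~\ref{lem:mergerlemma}) gives a smaller graph with an $s$-oddomorphism, and its $K_t$-immersion lifts back along those paths.
\end{itemize}
In particular, the factor $7t+7$ comes from that minimum-degree theorem, not from allotting classes to pairs.
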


While a proper immersion-closed graph class $\mathcal{M}$ may not be union-closed, it is contained in a proper graph class $\mathcal{M}'$ that is immersion-closed and union closed. In particular, if $G$ and $H$ are non-isomorphic graphs such that $\hom(K,G)=\hom(K,H)$ for all $K\in \mathcal{M}'$, then we also have $\hom(K,G)=\hom(K,H)$ for all $K\in \mathcal{M}$. Hence, Theorem \ref{thm:maintheorem} holds for every proper immersion-closed graph class.

Immersion is a well-studied\footnote{See e.g. \cite{BUSTAMANTE2022103550,CHUDNOVSKY2016208,kuratowskiimmersion,LIU2023252,WOLLAN201547}, as a small sampling.}  graph order incomparable to the minor order, which nevertheless shares important properties of it: it is a well-quasi-order, as conjectured by Nash-Williams in 1963 \cite{NashWilliams1963} and proved by Robertson and Seymour~\cite{ROBERTSON2010181}. Moreover, if a graph $G$ contains $H$ as a topological minor, then $G$ contains $H$ both as a minor and as an immersion. In particular, this tells us that the result of Neuen and Seppelt helps to emphasize the importance of not only Conjecture \ref{conj:roberson} but Theorem~\ref{thm:maintheorem} as well, since graph classes closed under topological minors are natural generalizations of both immersion-closed classes and minor-closed classes. We now give the definition of immersion.

\begin{definition}
    Let $G$ be a graph, and suppose we have three distinct vertices $u,v,w$ with edges $uv$,$vw$. To \underline{split off} $uv$ and $vw$ is to delete the edges $uv$ and $vw$, and add the edge $uw$ (possibly adding a parallel edge if necessary).
     A graph $G$ contains a graph $H$ as an \underline{immersion} if $H$ can be obtained from a subgraph of $G$ by splitting off pairs of edges and deleting isolated vertices.
     A graph class $\mathcal{M}$ is \underline{immersion-closed} if for every $G\in \mathcal{M}$ and every graph $H$, if $G$ contains $H$ as an immersion, then $H\in \mathcal{M}$.
\end{definition}

Note that for every positive integer $d$, the class of graphs with maximum degree at most $d$ is immersion-closed. Therefore, Theorem~\ref{thm:maintheorem} generalizes the result of Roberson \cite{oddomorphismpaper} that homomorphisms from bounded degree classes does not capture isomorphism.  The following further strengthening may be possible and we leave it as an open problem. 

\begin{conjecture}
    Let $\mathcal{M}_{1}$ and $\mathcal{M}_{2}$ be two distinct immersion-closed and union-closed classes. Then for some $i,j \in \{1,2\}$ with $i \neq j$, there are graphs $G$ and $H$ such that $G \equal_{\mathcal{M}_{i}} H$ and $G  \neq_{\mathcal{M}_{j}} H$.
\end{conjecture}

To prove Theorem \ref{thm:maintheorem}, we utilize tools developed by Roberson in \cite{oddomorphismpaper}, which essentially reduce the problem to a colouring one. 
Recall that $N_{G}(v)$ denotes the neighbourhood of $v$ in a graph $G$. The following definitions are introduced in  \cite{oddomorphismpaper}.

\begin{definition}
    Let $f\colon V(G)\rightarrow S$ be a proper colouring of a graph $G$. A vertex $\vtxa \in V(G)$ is \underline{$f$-odd} (respectively, \underline{$f$-even}) if $N_G(v)$ contains an odd (respectively, even) number of vertices of each colour in $S\setminus \{ f(v)\}$. 
\end{definition}

Of course, it is possible for a vertex to be neither $f$-odd nor $f$-even. 

\begin{definition}
A proper $t$-colouring $f$ of a graph $G$ is a \underline{$t$-oddomorphism}, if
\begin{enumerate}
    \item Every vertex of $G$ is either $f$-even or $f$-odd.
    \item Every colour class contains an odd number of odd vertices.
\end{enumerate}
\end{definition}

If $G$ admits a $t$-oddomorphism $f$, we say that $f$ is an \underline{oddomorphism from $G$ to $K_t$} and write $G \odd K_{t}$. In Section~\ref{sec:mainresult} we prove the following.

\begin{theorem}
\label{thm:largeoddimplieslargeimmersion}
    If $G\odd K_{\oddbound}$, then $G$ contains a $K_{t}$-immersion. 
\end{theorem}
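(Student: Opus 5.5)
The plan is to build the $K_t$-immersion directly from the colour classes of an oddomorphism. The key point is that edges between two distinct colour classes belong to exactly one pair of colours, so paths using pairwise disjoint sets of colour pairs are automatically edge-disjoint.

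Let $N=\oddbound$ and let $f$ be an $N$-oddomorphism of $G$ with colour classes $V_1,\dots,V_N$. The basic parity fact I will use repeatedly is this: for distinct colours $i,j$, in the bipartite graph $G[V_i,V_j]$ (edges between $V_i$ and $V_j$) a vertex has odd degree exactly when it is $f$-odd. Hence $V_i$ and $V_j$ each contain an odd number of odd-degree vertices of $G[V_i,V_j]$. Since every component has an even number of odd-degree vertices, some component of $G[V_i,V_j]$ contains odd vertices from both sides, so there is a path inside $G[V_i\cup V_j]$ from an odd vertex of colour $i$ to an odd vertex of colour $j$. More generally, every odd vertex has at least one neighbour in every other colour class.

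I would fix $t$ colours $c_1,\dots,c_t$ and an odd branch vertex $v_a\in V_{c_a}$ for each $a$. I would then partition the remaining colours into $\binom{t}{2}$ private blocks $B_{ab}$, one for each pair $\{a,b\}$, each of size about $7t+7$. The path realising the edge $v_av_b$ will use only edges between $V_{c_a}$ or $V_{c_b}$ and colours of $B_{ab}$, together with edges among colours of $B_{ab}$. Every colour pair used by this path then contains a colour private to $\{a,b\}$, so paths for different pairs are edge-disjoint automatically. Within a single path, edge-disjointness is immediate after shortcutting walks to paths.

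For a fixed pair $\{a,b\}$, since $v_a$ and $v_b$ are odd, they have neighbours $x\in V_k$ and $y\in V_{k'}$ for any $k,k'\in B_{ab}$. The remaining task is to join such neighbours by a walk using only colours of $B_{ab}$, and this is the main obstacle: $x$ and $y$ may be even vertices, and the parity argument only links \emph{some} odd vertices of two classes, not prescribed ones.

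What I would try first is to chain through the block. Choose colours $k_1,\dots,k_m\in B_{ab}$. Starting from $v_a$, step to a neighbour in $V_{k_1}$, then use the component structure of $G[V_{k_1}\cup V_{k_2}]$ and a $T$-join argument to move to an odd vertex, and so on, finally closing up at $v_b$. This must be done flexibly, for instance by choosing, among many candidate colours of the block, one whose class contains a neighbour of $v_a$ lying in a component with an odd vertex. The repeated use of parity and pigeonhole over the colours is what I expect to consume the $O(t)$ colours per block, hence the bound $7t+7$.

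If prescribed endpoints prove too rigid, the fallback is to let the branch vertices themselves be chosen by a pigeonhole argument over many odd candidates in each $V_{c_a}$. Only the linkage and the accounting of colours per pair should require real work; edge-disjointness is automatic by construction.
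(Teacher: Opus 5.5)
Your proposal has a genuine gap. The step you flag as ``the main obstacle'' is essentially the whole theorem, and the proposal gives no argument for it.

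The bookkeeping is sound. An edge between two classes determines its colour pair, so paths with private colour sets are edge-disjoint. (You could even let the $\{a,b\}$-path use $V_{c_a}$--$V_{c_b}$ edges.)

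Linkage for a single pair is also easy if the endpoints are free. For any colour set $S$, $f$ restricted to $G_S=G[f^{-1}(S)]$ is again an oddomorphism. In each component of $G_S$, the numbers of odd vertices of the colours in $S$ all have the same parity: for $i,j\in S$, their sum counts the odd-degree vertices of the $C_i$--$C_j$ bipartite subgraph of that component. Hence some component of $G_{S_{ab}}$, with $S_{ab}=\{c_a,c_b\}\cup B_{ab}$, contains odd vertices of both $c_a$ and $c_b$.

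What is missing is a single branch vertex $v_a$ that lies in a suitable component of $G_{S_{ab}}$ for all $t-1$ choices of $b$ at once. The sets $S_{ab}$ for different $b$ share only the colour $c_a$, and no parity fact ties their component structures together. So the odd vertex of colour $c_a$ that works for one $b$ need not work for another.

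Fixing $v_a,v_b$ in advance certainly fails. Three disjoint copies of $K_N$, each coloured bijectively, form an $N$-oddomorphism, and $v_a,v_b$ may lie in different copies. ``Chaining with a $T$-join'' and ``pigeonhole over odd candidates'' are not arguments. The number of odd vertices per class is unbounded, and you name no quantity for a pigeonhole to act on. Nothing in the sketch explains where a budget of about $7t+7$ colours per block would come from either.

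The paper avoids this consistency problem by never fixing colours or branch vertices.
\begin{itemize}
\item In a vertex- then edge-minimal counterexample, each $G[C_i,C_j]$ is a forest (Lemma \ref{lem:acycliccolouring}). Also, no $2$-coloured path of length at least $2$ joins odd vertices of different colours, since such a path could be split off.
\item Splitting off a decomposition of each forest into paths between odd vertices gives a multigraph on the odd vertices in which every vertex has degree at least $N-1$.
\item If its simplification has minimum degree at least $7t+7$, Theorem \ref{thm:mindegreethm} of Gauthier, Le and Wollan gives the $K_t$-immersion. This is the source of the $7t+7$.
\item Otherwise, two same-coloured odd vertices are joined by a family $\mathcal P$ of $\binom t2$ edge-disjoint $2$-coloured paths. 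Their $\mathcal P$-merger is a smaller graph that still has an $N$-oddomorphism (Lemma \ref{lem:mergerlemma}). By minimality it has a $K_t$-immersion, which lifts to $G$ by routing each branch path through the merged vertex along its own path of $\mathcal P$.
\end{itemize}
To rescue your private-block scheme, you would need a new lemma that produces consistent branch vertices, and the parity facts you list do not supply one.
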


While it does not appear at all related, it turns out that Theorem \ref{thm:largeoddimplieslargeimmersion}   and a result in \cite{oddomorphismpaper} implies Theorem \ref{thm:maintheorem}. We show that this is the case in Section~\ref{sec:homcounting}. The bound in Theorem~\ref{thm:largeoddimplieslargeimmersion} is not necessarily optimal, and we make the following conjecture. 

\begin{conjecture}
    There exists an absolute constant $c$ such that if $G\odd K_{ct}$, then $G$ contains a $K_{t}$-immersion. 
\end{conjecture}

It might even be possible that $c=1$ can be taken.
When $t\leq 4$, we can indeed take $c=1$ due to a tight lower bound on the treewidth $\operatorname{tw}(G)$ of graphs $G$ admitting a $t$-oddomorphism below. (We recall the definition of treewidth in Section~\ref{sec:prelims}.)
While Theorem \ref{thm:treewidththm} follows from the proof of \cite[Theorem 1.2]{Neuen2024HomomorphismDistinguishing} by Neuen,\footnote{Neuen proves a more general result that if $G$ admits an oddomorphism to \textit{any} graph $H$ (see Definition \ref{def:oddomorphismGH}), then $\operatorname{tw}(G)\geq \operatorname{tw}(H)$.} we give a short elementary proof of this fact. 

\begin{theorem}
\label{thm:treewidththm}
     If $f:G \odd K_{t}$, then $\operatorname{tw}(G) \geq t-1$. 
\end{theorem}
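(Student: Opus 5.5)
We argue by contradiction, using a parity argument on the separations of a tree decomposition. Suppose $(T,\{B_s\}_{s\in V(T)})$ is a tree decomposition of $G$ of width at most $t-2$, so every bag has at most $t-1$ vertices. Since $f$ is a proper colouring with $t$ colours, every bag, and hence every adhesion set $B_s\cap B_{s'}$, misses at least one colour. The aim is to orient every edge of $T$ using parities of odd vertices, find a sink, and obtain a contradiction there.

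\textbf{Key parity lemma.} Let $X,Y\subseteq V(G)$ with $X\cup Y=V(G)$, no edges between $X\setminus Y$ and $Y\setminus X$, and $S=X\cap Y$. Let $c\neq d$ be two colours, neither of which appears on $S$, and put $U=X\setminus S$. Every neighbour of a vertex of $U$ lies in $X$, and neighbours of colour $d$ cannot lie in $S$, so they lie in $U$. Count the edges of $G$ between $f^{-1}(c)\cap U$ and $f^{-1}(d)\cap U$ by summing degrees into colour $d$ over the $c$-coloured vertices of $U$. An $f$-even vertex contributes an even number and an $f$-odd vertex an odd number. Hence this edge count has the same parity as the number of $f$-odd vertices of colour $c$ in $U$. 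By symmetry it also has the parity of the number of $f$-odd vertices of colour $d$ in $U$. So the parity $p(U)$ of the number of odd vertices of colour $c$ in $U$ does not depend on the choice of missing colour $c$.

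Moreover, for a colour $c$ missing from $S$, the odd $c$-vertices split between $X\setminus S$ and $Y\setminus S$. Since colour class $c$ contains an odd number of odd vertices, exactly one of the two sides has odd parity.

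\textbf{Orientation and sink argument.} For each edge $ss'$ of $T$, let $X$ and $Y$ be the unions of the bags on the two sides of $ss'$, so that $X\cap Y=B_s\cap B_{s'}$ and the usual separation property holds. Orient $ss'$ towards the side whose private part has odd parity. By the lemma this is well defined, independent of which missing colour we use. Every orientation of a finite tree has a sink $s$. Pick a colour $c$ not appearing in $B_s$.

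For each neighbour $s'$ of $s$, the edge $ss'$ points to $s$. Since $c\notin B_s\cap B_{s'}$, we may use $c$ to evaluate the orientation, so the branch of $T-s$ containing $s'$ contains, outside $B_s$, an even number of odd $c$-vertices. Every vertex of colour $c$ lies outside $B_s$. By the tree decomposition axioms, each such vertex lies in exactly one branch of $T-s$. Summing over branches, the total number of odd $c$-vertices is even, contradicting condition 2 of the oddomorphism. Hence $\operatorname{tw}(G)\geq t-1$.

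The main subtlety is the well-definedness of the orientation, i.e.\ the parity lemma showing that different missing colours agree. It needs a second missing colour, and when $|S|\le t-2$ there are at least two. If only one colour is missing from some adhesion, the orientation is defined by that colour alone, so no consistency issue arises. The sink step only ever uses a colour missing from the sink's bag, which is automatically missing from all adjacent adhesions.
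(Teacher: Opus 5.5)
Your proof is correct, and it takes a genuinely different route from the paper's.

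The paper works with a counterexample minimizing $|V(G)|+|E(G)|$. It invokes Lemma~\ref{lem:acycliccolouring} to make every $G[C_i,C_j]$ a forest and roots a tree decomposition of width at most $t-2$. It picks an $f$-odd vertex $w$ whose bag $\beta_v$ nearest the root is as deep as possible, takes a colour $c'$ missing from $\beta_v$, and follows a $2$-coloured path from $w$ through a $c'$-coloured neighbour to another $f$-odd vertex. Either this path meets $\beta_v$ somewhere other than $w$, or it does not. In the first case $\beta_v$ contains a second vertex of colour $f(w)$, which is merged with $w$ via Lemma~\ref{lem:mergerlemma} without increasing the width, contradicting minimality. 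In the second case the path ends at an $f$-odd vertex strictly deeper than $w$, contradicting the choice of $w$.

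You need none of this machinery: no minimality, no acyclicity reduction, no merging. Your handshake-parity lemma is the key step. It shows that across a separation of order less than $t$, the parity of the number of odd vertices of a missing colour on one side does not depend on which missing colour is used. This turns the oddomorphism directly into a well-defined orientation of the edges of $T$. The contradiction then comes from the familiar sink argument, the same one used to show that havens obstruct decompositions of small width, combined with additivity of these parities over the branches at the sink.

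The paper's route has the advantage of reusing the merger and acyclicity lemmas it needs anyway for the immersion theorem, in the same ``extremal odd vertex plus $2$-coloured path'' style as its main proof. Yours is self-contained and non-inductive, applies to any decomposition of width at most $t-2$ of $G$ itself, and isolates the parity invariant that actually drives the bound.
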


The $t-1$ bound given here is best possible since $K_{t}\odd K_{t}$ and $\operatorname{tw}(K_{t})=t-1$. Theorem~\ref{thm:treewidththm} also gives an alternative proof that homomorphism counts from graphs of bounded treewidth graph do not fully capture isomorphism. However, as we have mentioned above, homomorphism counts from bounded treewidth graphs is exactly described by the $k$-dimensional Weisfeiler-Leman algorithm, which is already known to not fully capture isomorphism (\cite{CaiFurerImmerman1992}). 
The rest of the paper is structured as follows. In Section \ref{sec:prelims} we give some definitions, state a known key result, and prove some preliminary lemmas about oddomorphisms. In Section \ref{sec:mainresult}, we prove Theorem \ref{thm:largeoddimplieslargeimmersion}. In Section \ref{sec:treewidth}, we prove Theorem \ref{thm:treewidththm}. Finally, in Section \ref{sec:homcounting}, we prove Theorem \ref{thm:maintheorem}. We devote the rest of this section to sketch the proofs of our oddomorphism results.

To prove Theorem \ref{thm:largeoddimplieslargeimmersion}, we observe first that oddomorphisms are preserved when deleting a $2$-coloured cycle and when splitting off a $2$-coloured path between two $f$-odd vertices of different colours. Thus, in a minimal counterexample, any two colour classes induce a forest in which each component has $f$-odd vertices of exactly one colour. We further split off 2-coloured paths between $f$-odd vertices in these components to obtain a new graph $G'$. Note that the oddomorphism is no longer preserved. Nonetheless, if $f$ is a $\oddbound$-oddomorphism of $G$, then either $G'$ contains two vertices of same colour with at least ${t \choose 2}$ parallel edges between them, or the graph obtained from $G'$ by removing parallel edges has minimum degree at least $7t+7$. 

In the former case, we identify the two vertices in $G$, delete the $2$-coloured paths between them, and take the symmetric difference of their neighbourhoods. 
This gives a smaller graph with an $\oddbound$-oddomorphism, and we show that the $K_{t}$-immersion of the smaller graph lifts to an immersion of the original graph. 
In the latter case, a theorem of Gauthier, Le and Wollan \cite{GAUTHIER201998} gives the $K_{t}$-immersion.


To prove Theorem \ref{thm:treewidththm}, suppose $G$ has a $t$-oddomorphism $f$ and has treewidth at most $t-2$, and that $G$ is vertex minimal with respect to these properties. Take a tree decomposition of minimum width and root the tree decomposition at some vertex. We choose a non-leaf bag $\beta_v$ containing an $f$-odd vertex $w$ such that no other $f$-odd vertex appears farther from the root than $w$. Since $f$ is a $t$-oddomorphism and each bag has size at most $t-1$, $w$ has a neighbour $w'$ in a bag farther away from the root whose colour does not appear in $\beta_v$. By looking at the Kempe chain between these two colours, we find two vertices with the same colour in $\beta_v$ that we can identify without increasing the treewidth while preserving the oddomorphism, contradicting the minimality of $G$.

\section{Preliminaries}
\label{sec:prelims}

\subsection{An equivalent definition for immersions and a key result}

An equivalent (see \cite{immersiondef}) and particularly  useful definition of immersions reads as follows. A graph $G$ contains a graph $H$ as an \underline{immersion} if there is a one-to-one mapping $f :V(H) \to V(G)$ and a collection of edge-disjoint paths in $G$, one for each edge
of $H$, such that the $(u,v)$-path $P_{uv}$ corresponding to the edge $uv \in E(H)$ has endpoints $f(u)$
and $f(v)$. The vertices in $f(V(H))$ are called \underline{branch vertices}. 

We will need the following theorem of Gauthier, Le and Wollan. (See \cite{immersiondef} and \cite{DeVosDvorakFoxMcDonaldMoharScheide2014} for previous results on minimum degree conditions forcing clique immersions.)

\begin{theorem}[\cite{GAUTHIER201998}]
\label{thm:mindegreethm}
    Every simple graph with minimum degree $7t +7$ has a $K_{t}$-immersion. 
\end{theorem}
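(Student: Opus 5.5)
This is a result we quote from \cite{GAUTHIER201998}, and we use it as a black box. If I had to prove it, I would follow the minimal-counterexample strategy introduced by DeVos et al.\ and sharpened by Gauthier, Le and Wollan. Set $d=7t+7$. Choose a simple graph $G$ with minimum degree at least $d$ and no $K_t$-immersion, with $|V(G)|+|E(G)|$ minimum. Minimality gives two facts. First, no edge can be deleted, so every edge has an endpoint of degree exactly $d$. Second, $G$ is connected.

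The basic reduction is \emph{complete splitting at a vertex}. Fix a vertex $v$ of degree $d$. Suppose the neighbours of $v$ can be paired into non-adjacent pairs $\{a_i,b_i\}$, leaving at most one neighbour unpaired. Split off each pair $a_iv,vb_i$ and then delete $v$. The result is again simple, because every pair was non-adjacent. Every other vertex keeps its degree, except possibly the unpaired one, which loses one. After a little care with that exceptional vertex, the new graph is a smaller graph of minimum degree at least $d$. By minimality it has a $K_t$-immersion. This lifts to $G$, because each new edge $a_ib_i$ corresponds to the path $a_ivb_i$, and these paths are edge-disjoint. That contradicts the choice of $G$.

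So such a pairing cannot exist. In other words, the complement of $G[N(v)]$ has no near-perfect matching. By Tutte--Berge, $N(v)$ then contains a large set $X$ that is almost a clique in $G$: only a few edges inside $X$ are missing, and they are concentrated on a small set of vertices.

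The remaining task is to build a $K_t$-immersion inside $X\cup\{v\}$ plus a little outside help. Take $t$ vertices of $X$ that have few missing edges among them. Route each missing edge $xy$ along a path of length two, $x\,z\,y$, where $z$ is a vertex of $X$ other than the chosen branch vertices. Choose the middle vertices $z$ greedily, with a separate $z$ for each missing edge, so that all these paths are edge-disjoint from each other and from the edges already used.

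\textbf{The main obstacle} is the counting that makes this greedy routing succeed with only $7t+7$, rather than the weaker $11t+7$ bound of earlier work. One has to control how many missing edges the Tutte--Berge structure allows, and show there are enough spare vertices of $X$ to serve as middle vertices. I would first try to bound the Tutte--Berge deficiency of the complement by roughly $d-2t$. Where $X$ alone is too small, I would use the edges leaving $X$ to supply extra length-two detours.
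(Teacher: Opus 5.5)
Treating this theorem as a black box is exactly what the paper does. It gives no proof and simply imports the result from Gauthier, Le and Wollan for the last step of the proof of Theorem~\ref{thm:largeoddimplieslargeimmersion}. The sketch you append, however, is not a proof. Besides the counting that you yourself flag as the main obstacle, two of its intermediate steps fail as stated.

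First, the unpaired neighbour needs more than ``a little care.'' Note that $d=7(t+1)$ is odd whenever $t$ is even. Nothing in your minimality facts prevents every neighbour of $v$ from having degree exactly $d$; for example, $G$ could be $d$-regular. In that case the split graph has a vertex of degree $d-1$, so it is not a smaller instance and minimality cannot be invoked. You would have to reformulate the minimal counterexample, for instance by allowing one deficient vertex, and carry that change through the rest of the argument.

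Second, the failure of a complete splitting at $v$ does not, via Tutte--Berge, give a large almost-clique. It only gives a set $S\subseteq N(v)$ such that $\overline{G[N(v)]}-S$ has at least $|S|+2$ odd components. So $G[N(v)\setminus S]$ contains the complete multipartite graph on these components, but the parts may be large. Suppose $\overline{G[N(v)]}$ is a disjoint union of two (for $d$ even) or three (for $d$ odd) odd cliques of nearly equal size. Then no admissible complete splitting at $v$ exists, yet $G[N(v)]$ is complete bipartite or tripartite. It has $\Theta(d^2)$ non-edges and no large nearly complete subset.

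This also breaks your routing step. In that configuration, any $t$ branch vertices in $N(v)$ span roughly $t^2/6$ or more non-edges, while only $d-t=6t+7$ other vertices of $N(v)$ exist. So for large $t$ you cannot give each missing edge its own middle vertex $z$. Middle vertices must be shared, each carrying at most $\lfloor t/2\rfloor$ paths, and edge-disjointness must be organised globally, or paths must leave $N(v)$. That quantitative routing argument is where the whole content of the theorem and the constant $7t+7$ lie. Your proposal only says you would try to supply it, so it contains no argument for the stated bound.
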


\subsection{Definition of treewidth}

Let $G$ be a graph. A \underline{tree decomposition} of $G$ is a pair
$\left(T, \{\beta_x\}_{x \in V(T)}\right)$ where $T$ is a tree and each $\beta_x \subseteq V$
(called a \underline{bag}) such that:
\begin{enumerate}
  \item $\bigcup_{x \in V(T)} \beta_x = V$;
  \item for every edge $uv \in E(G)$, there exists $x \in V(T)$ with
        $uv \in E(G[\beta_x])$;
  \item for every vertex $v \in V(G)$, the set
        $\{ x \in V(T) \mid v \in \beta_x \}$ induces a connected subtree of $T$.
\end{enumerate}

The \underline{width} of a tree decomposition
$\left(T, \{\beta_x\}_{x \in V(T)}\right)$ is $\max_{x \in V(T)} |\beta_x| - 1$. The \underline{treewidth} of a graph $G$, denoted $\operatorname{tw}(G)$, is the minimum
width over all tree decompositions of $G$.

\subsection{Preliminaries about oddomorphisms}
First we record a property that is immediate from the definition, and that we will use implicitly throughout.

\begin{prop}\label{prop:mindegree}
Let $f:G \odd K_{n}$. Then every $f$-odd vertex has degree at least $n-1$.
\end{prop}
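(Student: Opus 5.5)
We argue directly from the definitions of $f$-odd vertices and $t$-oddomorphisms; no earlier results are needed. Let $v$ be an $f$-odd vertex, and set $S = f(V(G))$, the set of colours used by the oddomorphism $f$ from $G$ to $K_n$, so that $|S| = n$.

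First we check that every one of the $n$ colours actually occurs. Condition (2) in the definition of a $t$-oddomorphism says that every colour class contains an odd number of odd vertices. An odd number is at least one, so every colour class is nonempty. (If one reads $S$ simply as the $n$-element target set, the count is the same and this step is not needed.)

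Next, since $v$ is $f$-odd, $N_G(v)$ contains an odd number of vertices of each colour in $S \setminus \{f(v)\}$. Again, an odd number is at least $1$, so $v$ has at least one neighbour of each of these $n-1$ colours.

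These neighbours are pairwise distinct because they have distinct colours. Therefore $\deg_G(v) \ge |S \setminus \{f(v)\}| = n-1$.

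There is no real obstacle here. The only points needing care are that the colour set has exactly $n$ colours, and that "odd" forces a count to be nonzero.
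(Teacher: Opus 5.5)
Your proof is correct and matches the paper's one-line argument: an odd count is at least one, so an $f$-odd vertex has a neighbour in each of the $n-1$ other colour classes, and these neighbours are distinct. Your extra check that all $n$ colours occur (via condition 2) is harmless but not needed, since the colour set in the definition of $f$-odd is the full vertex set of $K_n$.
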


\begin{proof}
    An $f$-odd vertex has at least one neighbour in each colour class other than its own colour, and hence has degree at least $n-1$. 
\end{proof}

Now we give an important operation which preserves oddomorphisms.

\begin{definition}

Given an oddomorphism $f: G \odd K_n$, two vertices $\vtxa_1, \vtxa_2$ such that $f(\vtxa_{1}) = f(\vtxa_2)$ and a (possibly empty) set $\mathcal{P}$ of pairwise edge-disjoint $(\vtxa_{1},\vtxa_{2})$-paths, such for each path $P \in \mathcal{P}$, $P$ is $2$-coloured under $f$, we define the \underline{$\mathcal{P}$-merger of $\vtxa_1$ and $\vtxa_2$} to be the graph $G'$ obtained by identifying $\vtxa_1$ and $\vtxa_2$ into a single vertex $\vtxa'$ whose neighbourhood is the symmetric difference of the neighbourhoods of $\vtxa_1$ and $\vtxa_2$, and then deleting all of the edges from the paths in $\mathcal{P}$. 
\end{definition}
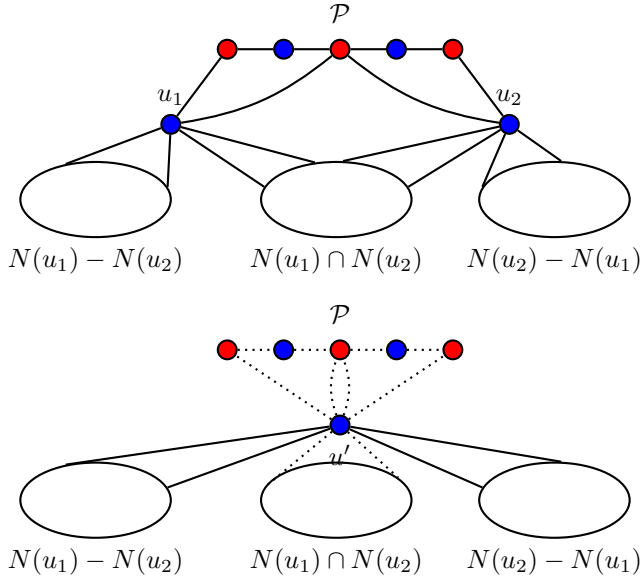
\begin{figure}
\begin{center}
\begin{tikzpicture}
    \node[bluevertexv2] at (0,0) (v1) [label = above:$\vtxa_{1}$] {};
    \node[bluevertexv2] at (4.5,0) (v2) [label = above:$\vtxa_{2}$] {};
    \node[redvertexv2] at (0.75,1) (v3) {};
    \node[bluevertexv2] at (1.5,1) (v4) {};
    \node[redvertexv2] at (2.25,1) (v5) {};
    \node[bluevertexv2] at (3,1) (v6) {};
    \node[redvertexv2] at (3.75,1) (v7) {};
    \node[dummywhite] at (2.25,1.2) (dummy1) [label = above:$\mathcal{P}$] {};
    \draw[thick,black] (v1)--(v3)--(v4)--(v5)--(v6)--(v7)--(v2);
    \draw[thick,bend right=15] (v1) to (v5);
    \draw[thick,bend right=15] (v5) to (v2);
    \node[ellipsenodev3] at (-1,-1) (ellipse1) [label = below:$N(u_{1}) - N(u_{2})$]{};
    \draw[thick,black] (v1) to (ellipse1.130);
    \draw[thick,black] (v1) to (ellipse1.10);
    \node[ellipsenodev3] at (2.2,-1) (ellipse2) [label = below:$N(u_{1}) \cap N(u_{2})$] {};
    \draw[thick,black] (v1) to (ellipse2.170);
    \draw[thick,black] (v1) to (ellipse2.120);
    \draw[thick,black] (v2) to (ellipse2.80);
    \draw[thick,black] (v2) to (ellipse2.10);
    \node[ellipsenodev3] at (5.1,-1) (ellipse3) [label = below:$N(u_{2})- N(u_{1})$] {};
    \draw[thick,black] (v2) to (ellipse3.170);
    \draw[thick,black] (v2) to (ellipse3.80);
    \begin{scope}[yshift = -4cm]
    \node[redvertexv2] at (0.75,1) (v3) {};
    \node[bluevertexv2] at (1.5,1) (v4) {};
    \node[redvertexv2] at (2.25,1) (v5) {};
    \node[bluevertexv2] at (3,1) (v6) {};
    \node[redvertexv2] at (3.75,1) (v7) {};
    \node[dummywhite] at (2.25,1.2) (dummy1) [label = above:$\mathcal{P}$] {};
    \node[bluevertexv2] at (2.25,0) (u1) [label= below:$u'$] {};
    \draw[thick,black,dotted] (u1) to (v3);
    \draw[thick,black,dotted] (u1) to (v7);
    \draw[thick,black,dotted] (v3)--(v4)--(v5)--(v6)--(v7);

    \node[ellipsenodev3] at (-1,-1) (ellipse1) [label = below:$N(u_{1}) - N(u_{2})$]{};
    \draw[thick,black] (u1) to (ellipse1.130);
    \draw[thick,black] (u1) to (ellipse1.10);
    \node[ellipsenodev3] at (2.2,-1) (ellipse2) [label = below:$N(u_{1}) \cap N(u_{2})$] {};
    \draw[thick,black,dotted] (u1) to (ellipse2.170);
    \draw[thick,black,dotted] (u1) to (ellipse2.20);

    \node[ellipsenodev3] at (5.1,-1) (ellipse3) [label = below:$N(u_{2})- N(u_{1})$] {};
    \draw[thick,black] (u1) to (ellipse3.170);
    \draw[thick,black] (u1) to (ellipse3.80);

    \draw[thick,black, bend right = 20,dotted] (u1) to (v5);
    \draw[thick,black, bend left =20,dotted] (u1) to (v5);
    \end{scope}
\end{tikzpicture}
\caption{An example of a $\mathcal{P}$-merger of $u_{1}$ and $u_{2}$. Note that the paths in $\mathcal{P}$ may also be in the union of $N(u_{1})$ and $N(u_{2})$.}
\label{fig:Merger}
\end{center}
\end{figure}
In the event $\mathcal{P}$ is empty, we simply call this the \underline{merger} of $u_{1}$ and $u_{2}$.
See Figure \ref{fig:Merger} for an illustration.
\begin{lemma}
\label{lem:mergerlemma}
    Suppose that $f: G \odd K_n$ and $G'$ is obtained from $G$ by the $\mathcal{P}$-merger of vertices $\vtxa_1$ and $\vtxa_2$ into $\vtxa'.$ Let $f':V(G) \to V(K_n)$ be the map where $f'(\vtxa') = f(\vtxa_1) =f(\vtxa_2)$ and, for all $\vtxb \in V(G) - \{\vtxa_1,\vtxa_2\}$, we have $f'(\vtxb) = f(\vtxb)$. Then we have $f': G \odd K_n$. 
\end{lemma}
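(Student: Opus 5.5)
The plan is to verify the two oddomorphism conditions for $f'$ by tracking, for each vertex $x$ of $G'$ and each colour $c \neq f'(x)$, the parity of the number of $c$-coloured neighbours of $x$. We compare this parity with the corresponding parity in $G$. First, $f'$ is a proper colouring. The new vertex $u'$ only receives neighbours from $N(u_1)\cup N(u_2)$, none of which has colour $f(u_1)$, and deleting edges cannot create a conflict. Note that $u_1u_2$ is not an edge, since they share a colour.

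\emph{Vertices other than $u'$.} Fix $x \in V(G)\setminus\{u_1,u_2\}$. The merger changes the neighbourhood of $x$ in three ways.
\begin{enumerate}
\item If $x$ is adjacent to exactly one of $u_1,u_2$, it is adjacent to $u'$ instead, so no colour count changes.
\item If $x$ is adjacent to both, it loses two neighbours of colour $f(u_1)$, which preserves every parity.
\item Edges of paths in $\mathcal{P}$ are deleted. Each $P\in\mathcal{P}$ is $2$-coloured with colours $a=f(u_1)$ and some $b$, so an internal vertex of $P$ loses exactly two neighbours on $P$, both of the other colour of $\{a,b\}$.
\end{enumerate}
Since the paths are edge-disjoint, the deletions at $x$ split into pairs of same-coloured neighbours. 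One point needs care: an edge $xu_i$ lying on a path is deleted, but it was already accounted for in the symmetric-difference step. I will handle this by first forming the symmetric-difference identification and then deleting the path edges, now read as edges at $u'$. The main bookkeeping obstacle is making sure this convention is consistent when a path edge at $u_1$ joins a vertex also adjacent to $u_2$. In that case the symmetric difference has already removed the edge, so I will adopt the convention that path edges at $u_1$ or $u_2$ are removed before identification. With that convention, the parities at $x$ are unchanged in all cases, so $x$ stays $f'$-odd or $f'$-even exactly as in $G$.

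\emph{The vertex $u'$.} For a colour $c \neq a$, the number of $c$-neighbours of $u'$ before path deletion is $|N_c(u_1)\triangle N_c(u_2)| \equiv |N_c(u_1)|+|N_c(u_2)| \pmod 2$. Each path $P$ contributes one end-edge at $u_1$ and one at $u_2$, both to vertices of colour $b_P$. Removing these two edges, interpreted as above, preserves the parity of $|N_{b_P}(u_1)|+|N_{b_P}(u_2)|$. Hence $u'$ has parity sum for each colour. So if $u_1,u_2$ are both odd or both even, $u'$ is even; if exactly one is odd, $u'$ is odd. In particular $u'$ is $f'$-odd or $f'$-even, which gives condition 1.

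For condition 2, only colour class $a$ changes. Its number of odd vertices changes by $-2$ if $u_1,u_2$ are both odd, by $0$ if exactly one is odd, and by $0$ if neither is. In every case the parity is preserved, so each colour class still has an odd number of odd vertices, and $f'$ is a $t$-oddomorphism.
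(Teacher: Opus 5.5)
Your proof is correct and follows essentially the same route as the paper. You track the parity of each colour count at vertices other than $u'$, where path deletions and losing the pair $u_1,u_2$ both change counts by even amounts. You then show $u'$ gets the parity sum of $u_1$ and $u_2$ in every colour, and check that the number of odd vertices of colour $f(u_1)$ changes by $0$ or $-2$.

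Your final convention, deleting the path edges at $u_1,u_2$ \emph{before} forming the symmetric difference, is the reading under which the paper's own counts (such as $-2q_{c'}-2p_{c'}$ at $u'$) are valid. It is genuinely needed. With the literal order, take $G$ with edges $u_1x,xz,zy,yu_2,xu_2$, its proper $2$-colouring, and $\mathcal{P}=\{u_1xzyu_2\}$. Then $x$ loses three neighbours of colour $f(u_1)$, the result is edgeless, and so it is not an oddomorphism.
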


\begin{proof}
Assume without loss of generality that $V(K_n)=\{1,\dots,n\}$ and for $c\in V(K_n)$, let $C_c=f^{-1}(\{c\})$ and $C_c'=(f')^{-1}(\{c\})$.
Observe that $f'$ is a homomorphism as $\vtxa_1$ and $\vtxa_2$ have the same colour under $f$. Let $c = f \lp \vtxa' \rp$.

First consider a vertex $\vtxb \in V \lp G' \rp - \vtxa'$ and let $P_1,\dots,P_\ell$ denote the paths in $\mathcal{P}$ containing $v$ (where possibly $\ell=0$). Let $c'=f(v)$ (where possibly $c'=c$). 
If $c' \neq c$, then as each path in $\mathcal{P}$ is $2$-coloured and has endpoints $\vtxa_{1}$ and $\vtxa_{2}$, every neighbour of $\vtxb$ in $P_{1},\ldots,P_{\ell}$ is coloured $c$. Thus $|N_{G'}(v)\cap C_c'|=|N_G(v)\cap C_c|- (2\ell -2b)$, where $b \in \{0,1\}$ and $b=1$ if and only if $\vtxb$ is adjacent to both $\vtxa_{1}$ and $\vtxa_{2}$ and the path $\vtxa_{1},\vtxb,\vtxa_{2}$ is not in $\mathcal{P}$. 
Therefore $|N_{G'}(v)\cap C_c'|$ has the same parity as $|N_G(v)\cap C_c|$.

Now assume that $c' =c$. Then $\vtxb$ is adjacent to neither $\vtxa_{1}$ nor $\vtxa_{2}$. For each colour $c''$, let $q_{c''}$ be the number of paths in $\mathcal{P}$ that contain $v$ and a vertex coloured $c''$. Then $|N_{G'}(v)\cap C_{c''}'| = |N_G(v)\cap C_{c''}|-2q_{c''}$, hence $|N_{G'}(v)\cap C_c'|$ has the same parity as $|N_G(v)\cap C_c|$.

Now consider $\vtxa'$. Fix a colour $c' \neq c$. Let $q_{c'}$ be the number of paths in $\mathcal{P}$ which contain a vertex coloured $c'$, and let $p_{c'}$ be the number of vertices in the common neighbourhood of $\vtxa_{1}$ and $\vtxa_{2}$ coloured $c'$ which do not appear in a path in $\mathcal{P}$. Then 
\[|N_{G'}(u')\cap C_{c'}'| = |N_G(u_1)\cap C_{c'}|+|N_G(u_2)\cap C_{c'}|  -2q_{c'} - 2p_{c'}\]

Therefore if $\vtxa_{1}$ and $\vtxa_{2}$ are both $f$-even or both $f$-odd, then $\vtxa'$ is $f'$-even, and if exactly one of $\vtxa_{1}$ and $\vtxa_{2}$ is $f$-even, then $\vtxa'$ is $f'$-odd.

Therefore all vertices in $G'$ are either $f'$-even or $f'$-odd. The last remaining condition is that there are an odd number of $f'$-odd vertices of each colour class. Other than colour $c$, the number of $f$-odd vertices remains the same in $f$ and $f'$. For colour $c$, if both $\vtxa_{1}$ and $\vtxa_{2}$ were $f$-odd, then $\vtxa'$ is $f$-even, and there was at least three and an odd number of $f$-odd vertices of colour $c$ in $f$. Therefore there is an odd number of $f$-odd vertices of colour $c$ under $f'$, and hence $f'$ is an oddomorphism.  
\end{proof}

We also observe that removing bi-coloured cycles preserves oddomorphisms. For the next lemma, we use the notation that given two disjoint sets of vertices, $A,B \subseteq V(G)$ the graph $G[A,B]$ is the graph containing all edges with one endpoint in $A$ and one endpoint in $B$. 

\begin{lemma}
\label{lem:acycliccolouring}
Let $f: G \odd K_{n}$ and let $C_{i}$ and $C_{j}$ be two colour classes, $i \neq j$. Let $C$ be a cycle in $G[C_{i},C_{j}]$. Then $f: G-E(C) \odd K_{n}$.
\end{lemma}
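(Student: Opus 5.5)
The plan is a direct parity check: deleting $E(C)$ changes no colours, so $f$ remains a proper $n$-colouring of $G-E(C)$. It then suffices to show that for every vertex $v$ and every colour $c\neq f(v)$, the parity of $|N(v)\cap C_c|$ is the same in $G$ and in $G-E(C)$. Once that holds, every vertex keeps its status ($f$-odd vertices stay odd and $f$-even vertices stay even). In particular, each colour class still contains the same, hence odd, number of odd vertices, so both conditions in the definition of a $t$-oddomorphism carry over.

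For the parity check, take a vertex $v$. If $v\notin V(C)$, its neighbourhood is unchanged. If $v\in V(C)$, then since $C$ is a cycle, $v$ is incident to exactly two edges of $C$. Because $C\subseteq G[C_i,C_j]$, we have $f(v)\in\{i,j\}$, and both of these $C$-neighbours lie in the other class. Say $f(v)=i$; then removing $E(C)$ lowers $|N(v)\cap C_j|$ by exactly $2$ and leaves $|N(v)\cap C_c|$ unchanged for every $c\neq j$. The case $f(v)=j$ is symmetric. Either way all parities are preserved.

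The only point needing care is the meaning of "cycle" when parallel edges are present, since the paper allows parallel edges to arise from splitting off. If $C$ is a $2$-cycle formed by two parallel edges between $u\in C_i$ and $w\in C_j$, deleting it removes two edges at each endpoint. Counting neighbours with multiplicity (as the paper implicitly does), the count in the opposite class drops by $2$, so the argument is unchanged. If neighbourhoods are instead taken as sets in a simple graph, then cycles have length at least $4$ and each vertex of $C$ has exactly two distinct neighbours on $C$, so again the count drops by exactly $2$. I expect no real obstacle here; the lemma is a routine verification.
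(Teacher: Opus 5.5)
Your proposal is correct and matches the paper's proof: deleting the edges of a $2$-coloured cycle changes each vertex's number of neighbours in the opposite class by $0$ or $2$, so every vertex keeps its $f$-odd/$f$-even status and each colour class keeps an odd number of odd vertices. Your remark about parallel edges is a reasonable clarification that the paper leaves implicit. The lemma does require counting neighbours with multiplicity there.
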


\begin{proof}
    As we only deleted edges, $f$ is a homomorphism from $G-E(C)$ to $K_{n}$. The parity of every vertex remains the same in $G -E(C)$ as $G$, since $C$ is $2$-coloured, so the number of neighbours coloured $i$ or $j$ changes either by $0$ or $2$. It follows that $f: G-E(C) \odd K_{n}$. 
\end{proof}

In particular, Lemma \ref{lem:acycliccolouring} implies that if an oddomorphism is edge-minimal, it is a so-called acyclic colouring.

\section{Large oddomorphism implies large clique immersion}
\label{sec:mainresult}

In this section we prove Theorem \ref{thm:largeoddimplieslargeimmersion}.
Given an $(x,y)$-path $P$, we say that \underline{to split a path $P$} is to iteratively split off edges of $P$ to create the edge $xy$.

    Suppose, for a contradiction, that Theorem~\ref{thm:largeoddimplieslargeimmersion} is false, and consider a vertex-minimal, and subject to that, edge-minimal counterexample. By Lemma \ref{lem:acycliccolouring}, we may assume that $G[C_{i},C_{j}]$ is a forest for all $i \neq j$. 

    \begin{claim*}

        Let $C_{i}$ and $C_{j}$ be two distinct colour classes of $f$. Then there is no path of length at least $2$ from an odd vertex in $C_{i}$ to an odd vertex in $C_{j}$ in $G[C_{i},C_{j}]$.
    \end{claim*}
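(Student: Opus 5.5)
The plan is to contradict the edge-minimality of the counterexample by splitting such a path. Suppose that $P$ is a path of length at least $2$ in $G[C_i,C_j]$ from an $f$-odd vertex $x\in C_i$ to an $f$-odd vertex $y\in C_j$. Since $P$ alternates between the colours $i$ and $j$ and its ends have different colours, $P$ has odd length, so its length is at least $3$. We form $G'$ from $G$ by splitting the path $P$, that is, by iteratively splitting off its edges so that they are replaced by the single edge $xy$. If $xy$ is already an edge of $G$, we do not keep a parallel edge; instead we also delete the existing edge $xy$, so that $G'$ stays simple. In either case $G'$ is obtained from a subgraph of $G$ by splitting off pairs of edges. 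Hence $G$ contains $G'$ as an immersion, and any $K_t$-immersion in $G'$ yields a $K_t$-immersion in $G$.

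Next we check that $f$, unchanged, is still an oddomorphism of $G'$. It is a proper colouring, because we only delete edges and possibly add the edge $xy$, whose ends have colours $i\neq j$. For parities, consider an internal vertex $v$ of $P$. It loses exactly its two path-neighbours, which both have the opposite colour in $\{i,j\}$, so every colour count at $v$ keeps its parity. Now consider $x$. If $xy\notin E(G)$, then $x$ loses its path-neighbour (coloured $j$) and gains $y$ (coloured $j$), so its count in colour $j$ is unchanged. If $xy\in E(G)$, then $x$ loses both its path-neighbour and $y$, so that count drops by $2$. The symmetric argument applies to $y$. All other vertices are untouched. Therefore every vertex keeps its status as $f$-odd or $f$-even. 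The set of odd vertices in each colour class is unchanged, so each colour class still contains an odd number of odd vertices, and $f: G'\odd K_{\oddbound}$.

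Finally we compare sizes. $G'$ has the same vertex set as $G$, and it has at least $|E(P)|-1\geq 2$ fewer edges. By the minimality of $G$, the graph $G'$ is not a counterexample, so $G'$ contains a $K_t$-immersion. By the first paragraph, so does $G$, which is a contradiction.

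The only delicate point is the bookkeeping at the ends $x$ and $y$, in particular the case where $xy$ is already an edge. There, deleting rather than duplicating the edge is exactly what keeps both simplicity and the parities intact. It is also where the hypothesis that both ends are $f$-odd and differently coloured is used: it makes the path have odd length at least $3$, so that the edge count strictly decreases.
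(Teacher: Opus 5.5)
Your proof is correct and follows the paper's argument: split off $P$, note that internal vertices lose two neighbours of the same colour while each end trades one neighbour of the opposite colour for another, so $f$ remains an oddomorphism of a graph with fewer edges, contradicting minimality. Your extra case $xy\in E(G)$ is harmless but cannot arise here, since $G[C_i,C_j]$ has already been reduced to a forest and $P+xy$ would be a cycle. Also, contrary to your closing remark, the $f$-oddness of $x$ and $y$ is never used; only the fact that their colours differ matters.
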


    \begin{claimproof}
        Suppose there is such an $(x,y)$-path $P$ where $x$ is an odd vertex in $C_{i}$ and $y$ is an odd vertex in $C_{j}$. Let $G'$ be the graph obtained from $G$ by splitting off the path $P$. We claim that $f$ is also an oddomorphism of $G'$. Indeed, each internal vertex of $P$ loses two neighbours of the same colour, and hence has the same parity. The vertex $x$ (respectively, $y$) loses one neighbour of colour $j$ (respectively, $i$) and gains a neighbour of colour $j$ (respectively, $i$). Thus the parity of $x$ and $y$ remain the same. All vertices not in $P$ have the same parity, and hence $f$ is an oddomorphism. As $P$ has length at least $2$, $G'$ has fewer edges than $G$. By the minimality of $G$, $G'$ has a $K_{t}$-immersion. As $G$ has $G'$ as an immersion, it follows that $G$ has a $K_{t}$-immersion. 
    \end{claimproof}

   Thus we have that for every two colour classes $C_{i}$ and $C_{j}$, the graph $G[C_{i},C_{j}]$ is a forest. Note that a vertex in $C_i\cup C_j$ has odd degree in $G[C_i,C_j]$ if and only if it is $f$-odd. It follows that we can decompose $G[C_{i},C_{j}]$ into (edge-disjoint) paths such that each path has two $f$-odd vertices in $G[C_{i},C_{j}]$ as its endpoints.

   Let $G'$ be the graph obtained by splitting off each such path for all pairs $i,j$ with $i \neq j$ and removing all even degree vertices. Let $G''$ be the simplification of $G'$ -- that is, $G''$ is the graph obtained from $G'$ by removing all parallel edges. If $G''$ has minimum degree at least $7t+7$, then by Theorem \ref{thm:mindegreethm}, $G''$ has a $K_{t}$-immersion, and as $G$ has $G''$ as an immersion, it follows that $G$ has a $K_{t}$-immersion. Thus we can assume that $G''$ has minimum degree less than $7t+7$. Now by Proposition \ref{prop:mindegree}, each odd vertex has at least one neighbour in each colour other than itself. Thus each odd vertex in $G'$ has degree at least ${t \choose 2}(7t+7)-1$ in $G'$. As $G''$ has minimum degree less than $7t+7$, this implies that there are two vertices $x,y$ such that there are at least ${t \choose 2}$ parallel edges between them, as otherwise vertices could have degree at most $({t \choose 2}-1)(7t+7)$.

   Let $x,y$ be two such vertices. By construction, there are at least ${t \choose 2}$ edge-disjoint paths $\mathcal{P} = \{P_{1},P_{2},\ldots,P_{{t \choose 2}}\}$ from $x$ to $y$ such that each path is $2$-coloured under $f$.

   By the claim if two odd vertices have different colours, either they are adjacent or there is no path between them using just two colours. By this observation and our construction it follows that $f(x) =f(y)$. 

   Let $G^{*}$ denote the $\mathcal{P}$-merger of $x$ and $y$ and let $f^*$ be the $\binom{t}2(7t+7)$-oddomorphism of $G^*$ as in  Lemma \ref{lem:mergerlemma}. 

   By the minimality of $G$, $G^{*}$ has a $K_{t}$-immersion. We claim that this implies that $G$ has a $K_{t}$-immersion. To see this, we use the definition of immersions given in Section~\ref{sec:prelims}. Let $t_{1},\ldots,t_{t}$ be the branch vertices of the $K_{t}$-immersion in $G^{*}$ and, for each $1\le i<j\le t$, let $Q_{ij}$ be the path from $t_{i}$ to $t_{j}$ in $G^*$ (such that $Q_{ij}$'s are mutually edge-disjoint). Observe that if $Q_{ij}$ uses the merged vertex more than once, $Q_{ij}$ in fact contains a cycle. Thus each path $Q_{ij}$ uses the merged vertex at most once. Therefore we can extend this immersion to an immersion of $G$ in the following manner. For each $Q_{ij}$ that uses the merged vertex, we use one of the paths $P_{1},\ldots,P_{{t \choose 2}}$ to extend $Q_{i,j}$ to a path in $G$. As there are at most ${t \choose 2}$ paths $Q_{ij}$, we can do this such that each $P_{i}$ is used at most once, and thus we have a $K_{t}$-immersion in $G$.

\section{A tight bound for oddomorphisms and treewidth}
\label{sec:treewidth}
In this section we prove Theorem \ref{thm:treewidththm}.

Suppose towards a contradiction that $\operatorname{tw}(G) \leq t-2$, $G$ has a $t$-oddomorphism $f$, and subject to these conditions, $G$ minimizes $|V(G)|+|E(G)|$. As removing edges does not increase the treewidth, Lemma \ref{lem:acycliccolouring} implies that for any two colour classes $C_{i},C_{j}$, we have that $G[C_{i},C_{j}]$ is acyclic.  Let $(T,\{\beta_x\}_{x\in V(T)})$ be a tree decomposition of $G$ of width at most $t-2$. Fix a vertex $r \in V(T)$ and view $T$ as a tree rooted at $r$. 

    For every $f$-odd vertex $u$ in $G$, let $D(u,r)$ be the shortest distance in $T$ from $r$ to a bag containing $u$. 
    Let $w$ be an $f$-odd vertex which maximizes $D(u,r)$ over all $f$-odd vertices $u$ (picking arbitrarily if there are more than one).
    Let $\beta_{v}$ be the bag which attains $D(w,r)$. 
    Note that $v\neq r$ because there are at least $t$ $f$-odd vertices and $|\beta_r|\leq t-1$.

    As $w$ is $f$-odd, for each of the $t-1$ colours distinct from $f(w)$, $w$ has a neighbour of that colour. Since $w\in\beta_v$ and $|\beta_v|\leq t-1$, there exists a colour $c'$ such that no vertex in $\beta_{v}$ has colour $c'$. Let $w'$ be a neighbour of $w$ of colour $c'$.
    Note that if $w'$ appears in a bag of the component of $T-v$ containing $r$, then $w$ also appears in that bag by the definition of tree decompositions, contradicting our choice of $\beta_v$. Hence, $w'$ appears in a bag of a component of $T-v$ not containing $r$. Let $c=f(w)$.

    Observe that in $G[C_{c},C_{c'}]$ there is a path starting at $w$, going through $w'$ and ending in some $f$-odd vertex $w''$. Let $P$ be such a path. If $P-w$ intersects $\beta_{v}$, then by our choice of the colour $c'$, there is a vertex $q \in \beta_{v}$ such that $q \neq w$ and $q$ has colour $c$. In this case, consider the merger of $q$ and $w$ and let $G'$ be the resulting graph. Then $|V(G')|<|V(G)|$ and, by Lemma \ref{lem:mergerlemma}, $G'$ admits a $t$-oddomorphism. 
    For each $x\in V(T)$, let $\beta_x'$ be the bag obtained from $\beta_x$ by replacing $q$ and $w$ (if they appear) with the merged vertex.
    Since $q$ and $w$ are in a common bag of $(T,\{\beta_x\}_{x\in V(T)})$, we have that $(T,\{\beta_x'\}_{x\in V(T)})$ is a tree decomposition of $G'$ of width $t-2$, contradicting the minimality of the counterexample $G$.  

    Thus $\beta_v$ has no vertex of $P$ except $w$. Hence the $f$-odd vertex $w''$ appears only in bags of components of $T-v$ not containing $r$, contradicting our choice of $w$.

\section{Homomorphism counting and oddomorphisms}
\label{sec:homcounting}

First we claim that to prove Theorem \ref{thm:maintheorem} it suffices to prove the following:

\begin{theorem}
\label{thm:reductiontheorem}
    For any $n \in \mathbb{N}$, there exists non-isomorphic graphs $H$ and $H'$ such that for every graph $F$, $\hom(F,H) \neq \hom(F,H')$ implies that $F$ contains a $K_{n}$-immersion.
\end{theorem}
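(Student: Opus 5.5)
We prove Theorem~\ref{thm:reductiontheorem} by combining Theorem~\ref{thm:largeoddimplieslargeimmersion} with Roberson's construction from \cite{oddomorphismpaper}. Roberson associates to each connected graph $K$ (here $K = K_m$) a pair of non-isomorphic graphs, the CFI-type graphs $G_K$ and $G'_K$. The key property, which is a result of \cite{oddomorphismpaper}, is that for every graph $F$, $\hom(F,G_K)\neq \hom(F,G'_K)$ holds only if some subgraph of $F$ admits an oddomorphism to $K$ (in the sense of Definition~\ref{def:oddomorphismGH}). Equivalently, $\hom(F,G_K)-\hom(F,G'_K)$ is a nonnegative quantity, positive exactly when a weak oddomorphism from $F$ to $K$ exists. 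We will take this result as given from \cite{oddomorphismpaper}, together with the fact that $G_K\not\cong G'_K$ for connected $K$.

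Given $n$, set $m=\binom{n}{2}(7n+7)$ and let $H=G_{K_m}$ and $H'=G'_{K_m}$. These are non-isomorphic. Now suppose $\hom(F,H)\neq\hom(F,H')$. By Roberson's result, there is a subgraph $F_0\subseteq F$ (the preimage structure of a weak oddomorphism) with $F_0\odd K_m$. The weak oddomorphisms in \cite{oddomorphismpaper} are exactly oddomorphisms from a subgraph. For $K=K_m$, an oddomorphism to $K_m$ in the general sense coincides with our definition of an $m$-oddomorphism: a proper colouring in which every vertex is odd or even and each colour class has an odd number of odd vertices.

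Applying Theorem~\ref{thm:largeoddimplieslargeimmersion} with $t=n$ to $F_0$ then gives a $K_n$-immersion in $F_0$, and hence in $F$, since immersion is inherited by supergraphs.

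The main obstacle is the bookkeeping in matching Roberson's general notion of (weak) oddomorphism $F\to K$ with the paper's colouring definition when $K=K_m$. In particular, one must check that the vertex map of a homomorphism to $K_m$ is exactly a proper $m$-colouring. One must also confirm that the parity conditions for the neighbours of $v$ mapped to each neighbour of $f(v)$ translate to "each other colour". The passage to a subgraph is the remaining point to verify.

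Finally, we note why Theorem~\ref{thm:maintheorem} follows. If $\mathcal{M}$ is proper and immersion-closed, some $K_n\notin\mathcal{M}$. Hence no $F\in\mathcal{M}$ contains a $K_n$-immersion, so $\hom(F,H)=\hom(F,H')$ for every $F\in\mathcal{M}$.
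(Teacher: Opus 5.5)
Your proposal is correct and is essentially the paper's proof: apply Roberson's Theorem~\ref{thm:robinsontheorem} with $G=K_m$ for $m=\binom{n}{2}(7n+7)$, extract from the weak oddomorphism a subgraph $F_0\subseteq F$ with $F_0\odd K_m$ (your identification of oddomorphisms to $K_m$ with $m$-oddomorphisms is correct), and invoke Theorem~\ref{thm:largeoddimplieslargeimmersion} with $t=n$. One small addition: the non-isomorphism of the pair, which the paper derives from Lemma~3.14 of \cite{oddomorphismpaper} and Theorem~\ref{thm:lovasz}, also follows directly. The identity map is an oddomorphism $K_m\odd K_m$, so Theorem~\ref{thm:robinsontheorem} gives $\hom(K_m,H)>\hom(K_m,H')$.
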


The next proof is very similar to a theorem in \cite{oddomorphismpaper}. 
\begin{theorem}
    \label{thm:equivalence}
    Theorem \ref{thm:reductiontheorem} and Theorem \ref{thm:maintheorem} are equivalent.
\end{theorem}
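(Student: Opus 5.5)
We prove the two directions separately. In both we use only elementary properties of immersion-closed classes: for every $n$, the class of graphs with no $K_n$-immersion is immersion-closed and union-closed (see the last paragraph). We also use the fact that a proper immersion-closed class must exclude some complete graph.

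For Theorem~\ref{thm:reductiontheorem} $\Rightarrow$ Theorem~\ref{thm:maintheorem}, let $\mathcal{M}$ be proper, immersion-closed and union-closed. Since $\mathcal{M}$ is proper, some graph $F_0\notin\mathcal{M}$. Let $n=|V(F_0)|$ if $F_0$ is simple; if multigraphs are allowed, take $n$ large enough that $K_n$ contains $F_0$ as an immersion, say $n=|V(F_0)|+|E(F_0)|$ or so, routing parallel edges through distinct spare vertices. Then $K_n$ contains $F_0$ as an immersion. Because $\mathcal{M}$ is immersion-closed and $F_0\notin\mathcal{M}$, no graph in $\mathcal{M}$ contains a $K_n$-immersion, since containment of immersions is transitive. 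Now apply Theorem~\ref{thm:reductiontheorem} with this $n$ to obtain non-isomorphic $H,H'$. For $K\in\mathcal{M}$, $K$ has no $K_n$-immersion, so $\hom(K,H)=\hom(K,H')$. This is exactly Theorem~\ref{thm:maintheorem}.

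For the converse, fix $n$ and let $\mathcal{M}_n$ be the class of graphs with no $K_n$-immersion. $\mathcal{M}_n$ is immersion-closed by transitivity of immersion. It is proper since $K_n\notin\mathcal{M}_n$. It is union-closed because $K_n$ is $(n-1)$-edge-connected, which makes an immersion of it in a disjoint union lie in one component. In an immersion the edge-disjoint paths between branch vertices connect all branch vertices, so they all lie in one component, and the immersion lives there. Theorem~\ref{thm:maintheorem} then gives non-isomorphic $H,H'$ with $\hom(F,H)=\hom(F,H')$ for all $F\in\mathcal{M}_n$. Contrapositively, $\hom(F,H)\ne\hom(F,H')$ forces $F\notin\mathcal{M}_n$, i.e.\ $F$ contains a $K_n$-immersion.

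The only mildly delicate points are the union-closure argument, which uses the connectivity of the immersion image, and making sure that some $K_n$ immerses the excluded graph $F_0$. The latter is trivial for simple graphs. If loops or multigraphs are in play, one must pick $n$ large enough, and I would handle this by subdividing parallel edges through extra branch-free vertices of $K_n$.
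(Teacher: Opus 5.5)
Your proof is correct and follows the same two-direction argument as the paper. For the forward direction, you pick $n$ with $K_n\notin\mathcal{M}$ and apply Theorem~\ref{thm:reductiontheorem}; for the converse, you apply Theorem~\ref{thm:maintheorem} to the class of graphs with no $K_n$-immersion and take the contrapositive.

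You are in fact more careful than the paper on two points. First, the paper leaves implicit that this class is proper, immersion-closed and union-closed, which you verify. Second, the paper cites union-closure for the existence of an excluded $K_n$, when it is really immersion-closure that gives this and also rules out $K_n$-immersions in members of $\mathcal{M}$.
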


\begin{proof}
    First suppose Theorem \ref{thm:reductiontheorem} is true and we are to prove Theorem \ref{thm:maintheorem}. Let $\mathcal{F}$ be a proper immersion-closed and union-closed class. As $\mathcal{F}$ is proper and union closed, there is some $n$ such that $K_{n}$ is not in $\mathcal{F}$. By Theorem \ref{thm:reductiontheorem}, there are non-isomorphic graphs $H$ and $H'$ such that $\hom(F,H) \neq \hom(F,H')$ implies that $F$ contains $K_{n}$ as an immersion. But then for all $K \in \mathcal{F}$, we have $\hom(K,H) = \hom(K,H')$, as $K_{n} \not \in \mathcal{F}$. Thus $H \equal_{\mathcal{F}} H'$, implying that $\equal_{\mathcal{F}}$ is not isomorphism.
    
    Now suppose that Theorem \ref{thm:maintheorem} is true. Let $n \in \mathbb{N}$ be given, and let $\mathcal{F}$ be the class of graphs with no $K_{n}$-immersion. By Theorem \ref{thm:maintheorem}, there exist non-isomorphic graphs $H,H'$ such that for all $K \in \mathcal{F}$, we have $\hom(K,H) = \hom(K,H')$. Hence, if $\hom(F,H) \neq \hom(F,H')$, then $F \not \in \mathcal{F}$, which implies that $F$ has a $K_{n}$-immersion by definition of $\mathcal{F}$. 
\end{proof}

Hence, our goal is to prove Theorem \ref{thm:reductiontheorem}.
We first extend the notion of oddomorphism from cliques to all graphs in order to utilize a theorem from \cite{oddomorphismpaper}.

\begin{definition}
    Let $f: V(G)\rightarrow V(H)$ be a homomorphism. A vertex $\vtxa \in V(G)$ is \underline{$f$-odd} (respectively, \underline{$f$-even}) if for every $\vtxb \in V(H) \cap N_{H}(f(\vtxa))$,  the number of vertices $\vtxc$ in $N_{G}(\vtxa)$ such that $f(\vtxc) = \vtxb$ is odd (respectively, even)
\end{definition}

\begin{definition} \label{def:oddomorphismGH}
Given graphs $G$ and $H$ and a homomorphism $f: V(G) \to V(H)$, we say that $f$ is an \underline{oddomorphism} from $G$ to $H$, denoted $G \odd H$, if:
\begin{enumerate}
    \item Every vertex of $G$ is either $f$-even or $f$-odd.
    \item For every vertex $v \in V(H)$, there is an odd number of vertices in $G$ whose image under $f$ is $v$ and are $f$-odd. 
\end{enumerate}
\end{definition}

\begin{definition}
   Let $F$ and $G$ be graphs and let $f$ be a homomorphism from $F$ to $G$. We say that $f$ is a \underline{weak oddomorphism} from $F$ to $G$ if there is a (not necessarily induced) subgraph $F'$ of $F$ such that the restriction of $f$ to $V(F')$ is an oddomorphism from $F'$ to $G$.
\end{definition}

\begin{theorem}[Theorem 3.13, \cite{oddomorphismpaper}]
\label{thm:robinsontheorem}
   Let $G$ be a connected graph. Then there are non-isomorphic graphs $G_{0}$ and $G_{1}$ such that $\hom(F, G_{0}) \geq \hom(F, G_{1})$ for every graph $F$ with strict inequality if and only if there exists a weak oddomorphism from $F$ to $G$. Moreover, if such a weak oddomorphism $f$ exists, then there is a connected subgraph $F'$ of $F$ such that $f|_{V(F')}$ is an oddomorphism from $F'$ to $G$. 
\end{theorem}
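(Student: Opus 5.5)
The plan is to reconstruct Roberson's argument: take a Cai--F\"urer--Immerman type construction over $G$, compare homomorphism counts one homomorphism $F\to G$ at a time, and turn the comparison into a question about solvability of a linear system over $\mathbb{F}_2$. For $U\subseteq V(G)$, let $G_U$ have vertex set $\{(v,S): v\in V(G),\ S\subseteq E_G(v),\ |S|\equiv [v\in U] \pmod 2\}$. Two vertices $(v,S)$ and $(w,T)$ are adjacent exactly when $vw\in E(G)$ and $vw\in S \iff vw\in T$. Set $G_0=G_\emptyset$ and $G_1=G_{\{u\}}$ for a fixed $u\in V(G)$. The projection $(v,S)\mapsto v$ is a homomorphism $G_U\to G$. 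Hence $\hom(F,G_U)=\sum_{\varphi}L_U(\varphi)$, where $\varphi$ ranges over $\mathrm{Hom}(F,G)$ and $L_U(\varphi)$ is the number of lifts of $\varphi$ to $G_U$.

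\textbf{Step 1 (lifts form an affine space).} I fix $\varphi$. A lift is a choice of a set $S_x\subseteq E_G(\varphi(x))$ for each $x\in V(F)$. In $\mathbb{F}_2$-variables $a_{x,e}$ (for $e\ni\varphi(x)$) the conditions are
\[
\textstyle\sum_{e\ni\varphi(x)} a_{x,e}=[\varphi(x)\in U] \ \text{ for each } x\in V(F),
\qquad
a_{x,e}=a_{y,e} \ \text{ for each } xy\in E(F) \text{ with } e=\varphi(x)\varphi(y).
\]
The homogeneous part does not depend on $U$. So $L_U(\varphi)$ is either $0$ or $2^{d(\varphi)}$ with the same $d(\varphi)$ for every $U$, and $L_\emptyset(\varphi)\ge 1$ because all $a_{x,e}=0$ is a solution. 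Therefore $\hom(F,G_0)\ge\hom(F,G_1)$ term by term. The inequality is strict if and only if, for some $\varphi$, the system with $U=\{u\}$ has no solution.

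\textbf{Step 2 (Fredholm alternative equals weak oddomorphism).} The system is unsolvable if and only if some $\mathbb{F}_2$-combination of its equations has zero left-hand side and right-hand side $1$. Such a combination is given by a set $X\subseteq V(F)$ of vertex equations and a set $E'\subseteq E(F)$ of edge equations. The coefficient of $a_{x,e}$ in the combination is $[x\in X]$ plus the number of edges of $E'$ at $x$ that are mapped to $e$. Zero left-hand side therefore says the following: each $x\in X$ has an odd number of $E'$-neighbours in every fibre $\varphi^{-1}(w)$ with $w\in N_G(\varphi(x))$, and each $x\notin X$ has an even number. The right-hand side equals $1$ exactly when $|X\cap\varphi^{-1}(u)|$ is odd. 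Let $F'$ be the subgraph with edge set $E'$ and vertex set $V(F)$. In $F'$ every vertex is $\varphi$-odd or $\varphi$-even, and the $\varphi$-odd vertices are exactly $X$.

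\textbf{Step 3 (parity propagation).} This is the main step: we only have oddness of $|X\cap\varphi^{-1}(u)|$ and need it at every vertex of $G$. For an edge $vw\in E(G)$, count the edges of $E'$ mapped onto $vw$ from the $v$ side. Each $x\in\varphi^{-1}(v)$ contributes an odd number if $x\in X$ and an even number otherwise, so the count is $\equiv|X\cap\varphi^{-1}(v)|$. Counting from the $w$ side gives $\equiv|X\cap\varphi^{-1}(w)|$. Since $G$ is connected, all fibres contain an odd number of odd vertices, so $\varphi|_{F'}$ is an oddomorphism and $\varphi$ is a weak oddomorphism. The converse holds because the same combination certifies unsolvability.

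\textbf{Step 4 (connected $F'$ and non-isomorphism).} For the ``moreover'' part, the sum $\sum_v |X\cap\varphi^{-1}(v)|$ is a sum over the components of $F'$. Some component must therefore contain an odd number of $X$-vertices over $u$. The local odd/even conditions restrict to that component, and Step 3 applied inside it gives a connected subgraph on which $\varphi$ is an oddomorphism. Finally, $G_0\not\cong G_1$ because $F=G$ with $\varphi=\mathrm{id}$ is an oddomorphism ($X=V(G)$, $E'=E(G)$). Hence $\hom(G,G_0)>\hom(G,G_1)$.
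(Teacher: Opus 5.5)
The paper gives no proof of this statement. It is quoted from Roberson, and the paper only adds that $G_0\not\cong G_1$ follows from Roberson's Lemma 3.14 together with Theorem~\ref{thm:lovasz}. Your proposal is a correct, self-contained reconstruction of the CFI-graph argument that the paper cites. The following steps are all sound:
\begin{enumerate}
\item the decomposition $\hom(F,G_U)=\sum_\varphi L_U(\varphi)$;
\item the observation that the lifts of a fixed $\varphi$ form an affine $\mathbb{F}_2$-space whose direction does not depend on $U$, with $L_\emptyset(\varphi)\ge 1$;
\item reading an $\mathbb{F}_2$ certificate of unsolvability $(X,E')$ as a subgraph on which every vertex is $\varphi$-odd or $\varphi$-even, with odd set $X$;
\item the double count of the edges of $E'$ lying over an edge $vw$, which carries the parity from the fibre over $u$ to every fibre using connectivity of $G$.
\end{enumerate}
The converse, where an oddomorphism witness is used as the certificate, and the reduction to a connected component for the ``moreover'' part are also right.

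The one place where you genuinely differ from what the paper says is non-isomorphism. The paper routes this through Roberson's Lemma 3.14 and Lovász's theorem. You instead apply the equivalence you have just proved to $F=G$ and $\varphi=\mathrm{id}$, which is visibly an oddomorphism. This gives $\hom(G,G_0)>\hom(G,G_1)$, and non-isomorphism then needs only the trivial fact that isomorphic graphs have equal homomorphism counts. That is more economical and keeps the argument self-contained.

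Two cosmetic points:
\begin{itemize}
\item In Step 4, the quantity to split over the components $K$ of $F'$ is $|X\cap\varphi^{-1}(u)|=\sum_K |X\cap V(K)\cap\varphi^{-1}(u)|$, not $\sum_v|X\cap\varphi^{-1}(v)|$. The argument you then give is the right one.
\item Step 2 asserts that the $\varphi$-odd vertices of the spanning $F'$ are exactly $X$. This uses that no vertex of $G$ is isolated, i.e.\ $|V(G)|\ge 2$. For $G=K_1$, every vertex is vacuously both odd and even, so the spanning choice can fail (take $F=2K_1$, where $|X|=1$ but both vertices count as odd). The fix is to take $F'$ to be $X$ with no edges, or to pick a component as in your Step 4.
\end{itemize}
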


We note that in Theorem 3.13 in \cite{oddomorphismpaper}, the fact that $G_{0}$ and $G_{1}$ are non-isomorphic is not stated in the theorem. However this follows from Lemma 3.14 in \cite{oddomorphismpaper} and Theorem \ref{thm:lovasz}.

\begin{proof}[Proof of Theorem \ref{thm:reductiontheorem}]
Let $n \in \mathbb{N}$ be given. Let $G = K_{{n \choose 2}(7n+7)}$. By Theorem \ref{thm:robinsontheorem}, there are non-isomorphic graphs $G_{0},G_{1}$ such that $\hom(F,G_{0}) \geq \hom(F,G_{1})$ for every graph $F$ with strict inequality if and only if $F$ has a weak oddomorphism to $G$. As $G_{0}$ and $G_{1}$ are not isomorphic, by Theorem \ref{thm:lovasz} there is a graph $F$ such that $\hom(F,G_{0}) \neq \hom(F,G_{1})$. For all such graphs $F$, we have that $F$ admits a weak oddomorphism to $G$. Thus there is a subgraph $F'$ of $F$ such that $F' \odd G$.  But by Theorem \ref{thm:largeoddimplieslargeimmersion}, this implies $F'$ has a $K_{t}$-immersion, and hence $F$ has a $K_{t}$-immersion, thus implying Theorem \ref{thm:reductiontheorem}. 
\end{proof}

A similar proof using Theorem \ref{thm:treewidththm} shows that if $\mathcal{M}$ is the class of graphs with treewidth at most $k$, then $\equal_{\mathcal{M}}$ is not isomorphism. Since this approach was already taken in \cite{Neuen2024HomomorphismDistinguishing} and this is a known result, we omit this proof. 

\begin{ack}
    Benjamin Moore thanks Sabrina Lato and Leqi (Jimmy) Zhu for inspiring discussions on oddomorphisms. Youngho Yoo and Benjamin Moore thank the Pacific Institute of Mathematics for travel support to the University of Manitoba. Benjamin Moore thanks the Chilean research agency ANID for travel support. 
\end{ack}

\bibliographystyle{acm}
\bibliography{bib.bib}
\end{document}